\newcommand{\bbA}{\mathbb{A}}
\newcommand{\bbR}{\mathbb{R}}
\newcommand\numberthis{\addtocounter{equation}{1}\tag{\theequation}}
\def\DelX0{\Delta(X_0)}
\def\delpi0{\delta^{\pi}}
\newcommand{\argmax}{\mathop{\mathrm{argmax}}}
\newtheorem{Theorem}{Theorem}
\newtheorem{Remark}{Remark}
\newcommand{\comment}[1]{}
\newcommand{\ap}[1]{{\color{black}{#1}}}
\title{\LARGE \bf Bounding the Greedy Strategy in Finite-Horizon String Optimization}
\author{Yajing Liu, Edwin K. P. Chong, and Ali Pezeshki
\thanks{This work is supported in part by NSF under award CCF-1422658, and by the CSU Information Science and Technology Center (ISTeC).}%
\thanks{Y. Liu is with the Department of Electrical and Computer Engineering, Colorado State University, Fort Collins, CO 80523, USA {\tt\small yajing.liu@ymail.com}}%
\thanks{E. K. P. Chong and A. Pezeshki are with the Department of Electrical and Computer Engineering, and the Department of Mathematics, Colorado State University, Fort Collins, CO 80523, USA {\tt\small Edwin.Chong,Ali.Pezeshki@Colostate.Edu}}%
}
\begin{document}

\maketitle
\thispagestyle{empty}
\pagestyle{empty}

\begin{abstract}
We consider an optimization problem where the decision variable is a string of bounded length. For some time there has been an interest in bounding the performance of the greedy strategy for this problem. Here, we provide weakened sufficient conditions for the greedy strategy to be bounded by a factor of $(1-(1-1/K)^K)$, where $K$ is the optimization horizon length. Specifically, we introduce the notions of $K$-submodularity and $K$-GO-concavity, which together are sufficient for this bound to hold. By introducing a notion of \emph{curvature} $\eta\in(0,1]$, we prove an even tighter bound with the factor $(1/\eta)(1-e^{-\eta})$. Finally, we illustrate the strength of our results by considering two example applications. We show that our results provide weaker conditions on parameter values in these applications than in previous results.
\end{abstract}

\section{Introduction}

In a great number of problems in engineering and applied science,
we are faced with optimally choosing a string
(finite sequence) of actions over a finite horizon to maximize an
objective function. The problem arises in sequential decision making
in engineering, economics, management science, and medicine. To formulate the problem precisely, let $\bbA$ be a set of possible actions. At each stage $i$, we
choose an action $a_i$ from $\bbA$. Let $A=(a_1,a_2,\ldots,a_k)$
denote a \emph{string} of actions taken over $k$ consecutive
stages, where $a_i\in \bbA$ for $i=1,2,\ldots, k$. Let $\bbA^*$
denote the set of all possible strings of actions (of arbitrary
length, including the empty string $\varnothing$). Finally, let $f: \bbA^*\to \bbR$ be an objective function, where $\bbR$ denotes the real numbers. Our goal is to find a string $M\in \bbA^*$, with a length $|M|$ not larger than $K$ (prespecified), to maximize the objective function:
\begin{align}
\text{maximize } & f(M) \nonumber \\ 
\textrm{subject to } & M\in\bbA^*,\ |M|\leq K.
\label{eqn:2}
\end{align}

The solution to \eqref{eqn:2}, which we call the \emph{optimal
strategy}, is hard to compute in general. One approach is to use
dynamic programming via Bellman's principle (see, e.g.,
\cite{Bertsekas2000} and \cite{Pow07}). However, the computational complexity of
this approach grows exponentially with the size of $\bbA$
and the horizon length $K$. On the other hand, the \emph{greedy
strategy}, though suboptimal in general, is easy to compute because at each stage, we only have to find an action to maximize the step-wise gain in the objective function. But how does the greedy strategy compare with the optimal strategy in terms of the objective function? 

The above question has attracted widespread interest, with some key results in the context of \emph{string-submodularity} (see, e.g., \cite{streeter2008online}, \cite{alaei2010maximizing}, \cite{ZhC13J}, and \cite{ZhW13}). These papers extend the celebrated results of Nemhauser \emph{et al.}~\cite{nemhauser1978}, \cite{nemhauser19781}, and some further extensions of them (see, e.g., \cite{conforti1984submodular}, \cite{vondrak2010submodularity}, \cite{calinescu2011maximizing}, and \cite{wang2012}), on bounding the performance of greedy strategies in maximizing submodular functions over sets, to problem \eqref{eqn:2} that involves maximizing an objective function over strings.  In particular, Streeter and Golovin~\cite{streeter2008online} show that if, in \eqref{eqn:2}, the objective function $f$ is \emph{prefix} and \emph{postfix} monotone and has the diminishing-return property, then the greedy strategy achieves at least a $(1-e^{-1})$-approximation of the optimal strategy. Zhang~\textit{et~al.}~\cite{ZhC13J},~\cite{ZhW13} consider a weaker notion of the postfix monotoneity and provide sufficient conditions for the greedy strategy to achieve a factor of at least $(1-(1-1/K)^K)$, where $K$ is the optimization horizon length, of the optimal objective value. They also introduce several notions of curvature, with which the performance bound for the greedy strategy can be further sharpened. 

But all the sufficient conditions obtained so far involve strings of length greater than $K$, even though \eqref{eqn:2} involves only strings up to length $K$. This motivates a weakening of these sufficient conditions to involve only strings of length at most $K$, but still preserving the bounds here. 

In this paper, we introduce the notions of $K$-submodularity and $K$-GO-concavity, which together are sufficient for the $(1-(1-1/K)^K)$ bound to hold. By introducing a notion of \emph{curvature} $\eta\in(0,1]$, we prove an even tighter bound with the factor $(1/\eta)(1-e^{-\eta})$. Finally, we illustrate the strength of our results by considering two example applications. We show that our results provide weaker conditions on parameter values in these applications than in previous results reported in \cite{ZhC13J} and \cite{ZhW13}.


\section{Review of Related Work}\label{sc:II}

In this section, we first introduce some definitions related to strings and curvature. We then review the main results from \cite{ZhC13J} and \cite{ZhW13}. Specifically, the results there provide sufficient conditions on
the objective function $f$ in (\ref{eqn:2}) such that the greedy strategy achieves a $(1-(1-1/K)^K)$-bound.



\subsection{Strings and Curvature}

For a given string $A=(a_1,a_2,\ldots,a_k)$, we define its \emph{length} as $k$, denoted $|A|=k$. If $M=(a_1^m,a_2^m,\ldots, a_{k_1}^m)$ and $N=(a_1^n,a_2^n,\ldots, a_{k_2}^n)$ are two strings in $\bbA^*$, we write $M=N$ if $|M|=|N|$ and $a_i^m=a_i^n$ for each $i=1,2,\ldots, |M|$. Moreover, we define string \emph{concatenation} as $M\oplus N= (a_1^m,a_2^m,\ldots, a_{k_1}^m,a_1^n,a_2^n,\ldots, a_{k_2}^n)$. If $M$ and $N$ are two strings in $\bbA^*$, we write $M\preceq N$ if we have $N=M\oplus L$ for some $L\in \bbA^*$. In this case, we also say that $M$ is a \emph{prefix} of $N$.

A function from strings to real numbers, $f: \bbA^*\to \bbR$, is \emph{string submodular} if
\begin{itemize}
\item[i.] $f$ has the \emph{prefix-monotone} property: $ \forall M, N \in \bbA^*,$  $f(M\oplus N)\geq f(M)$.
\item[ii.] $f$ has the \emph{diminishing-return} property: $\forall M\preceq N \in \bbA^*, \forall a\in \bbA$, $f(M\oplus (a))-f(M) \geq f(N\oplus (a))-f(N)$.
\end{itemize}

A function from strings to real numbers, $f: \bbA^*\to \bbR$, is \emph{postfix monotone} if
 $$\forall M, N \in \bbA^*, f(M\oplus N)\geq f(N).$$

The \emph{total backward curvature of $f$} is defined as 
\begin{align*}
\sigma&=\max\limits_{a\in A, M\in \mathbb{A}^*}\\
&\left\{\frac{(f((a))-f(\emptyset))-(f((a)\oplus M)-f(M))}{f((a))-f(\emptyset)}\right\}.
\end{align*}

\subsection{Bounds for the Greedy Strategy}

We now define optimal and greedy strategies for problem \eqref{eqn:2} and some related notation.
\begin{itemize}
\item[(1)] \emph{Optimal strategy}: 
Any solution to \eqref{eqn:2} is called an \emph{optimal} strategy. If $f$ is prefix monotone, then there exists an optimal strategy with length $K$, denoted $O_K=(o_1,\ldots,o_K)$. Let $O_i=(o_1,\ldots, o_i)$ for $i=1, \ldots, K$.
\item[(2)] \emph{Greedy strategy}:
A string $G_{k}=(g_1,g_2,\ldots,g_{k})$ is called a \emph{greedy} strategy if $\forall i=1,2,\ldots,k$,
\begin{align*}
g_i&\in\mathop{\argmax}\limits_{g\in \bbA} f((g_1,g_2,\ldots,g_{i-1},g)).
\end{align*}
Let $G_i=(g_1,\ldots, g_i)$ for $i=1, \ldots, K$. 
\end{itemize} 

The following two theorems summarize the performance bounds 
in \cite{ZhC13J} and \cite{ZhW13}.
\begin{Theorem}
\label{thm:bounds1}
If $f$ is string submodular and $f(G_i\oplus O_K)\geq f(O_K)$ holds for all $i=1,\ldots, K-1$, then any greedy strategy $G_K$ satisfies 
\[
f(G_K) \geq \left(1-\left(1-\frac{1}{K}\right)^K\right)f(O_K)>(1-e^{-1})f(O_K).
\]
\end{Theorem}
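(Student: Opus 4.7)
The plan is to combine the hypothesis $f(G_i \oplus O_K) \geq f(O_K)$ with telescoping, the diminishing-return property, and the greedy choice to establish a one-step recursion on $\delta_i := f(O_K) - f(G_i)$ of the form $\delta_{i+1} \leq (1 - 1/K)\,\delta_i$. Unfolding this recursion and using nonnegativity (normalization $f(\varnothing)=0$) then delivers the claimed $(1-(1-1/K)^K)$-bound. The final strict inequality against $(1-e^{-1})$ follows from the standard fact that the sequence $(1-1/K)^K$ increases to $e^{-1}$.

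First, I would fix $i \in \{0,1,\ldots,K-1\}$ and write the telescoping identity
\[
f(G_i \oplus O_K) - f(G_i) \;=\; \sum_{j=1}^{K}\bigl[f(G_i \oplus O_j) - f(G_i \oplus O_{j-1})\bigr].
\]
Since $G_i \preceq G_i \oplus O_{j-1}$, the diminishing-return property applied with $M=G_i$, $N=G_i \oplus O_{j-1}$, and action $a=o_j$ gives
\[
f(G_i \oplus O_j) - f(G_i \oplus O_{j-1}) \;\leq\; f(G_i \oplus (o_j)) - f(G_i).
\]
The greedy choice of $g_{i+1}$ maximizes $f(G_i \oplus (g))$ over $g\in \bbA$, so $f(G_i \oplus (o_j)) \leq f(G_{i+1})$. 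Substituting and summing yields
\[
f(G_i \oplus O_K) - f(G_i) \;\leq\; K\bigl[f(G_{i+1}) - f(G_i)\bigr],
\]
and the hypothesis $f(G_i \oplus O_K) \geq f(O_K)$ then implies $f(O_K) - f(G_i) \leq K[f(G_{i+1}) - f(G_i)]$.

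Second, rewriting this last inequality in terms of $\delta_i$ gives $\delta_{i+1} \leq (1 - 1/K)\,\delta_i$, and iterating from $i=0$ to $i=K-1$ produces $\delta_K \leq (1 - 1/K)^K \delta_0$. Using $\delta_0 = f(O_K) - f(\varnothing) \leq f(O_K)$ (prefix monotonicity together with the standard normalization $f(\varnothing)=0$) rearranges into
\[
f(G_K) \;\geq\; \Bigl(1 - (1-1/K)^K\Bigr)\, f(O_K).
\]

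The main obstacle in my view is making sure the diminishing-return step is applied at the correct ``scale'': it must be used to peel off one action $o_j$ at a time while keeping the prefix $G_i$ fixed and letting the inner string $O_{j-1}$ grow, which is exactly the configuration in which the property $M \preceq N$ holds. A secondary nuisance is the normalization needed to pass from $\delta_0$ to $f(O_K)$ on the right-hand side, but this is standard and harmless. Everything else is routine bookkeeping: a one-line telescoping, a one-line greedy bound, and a geometric-style recursion.
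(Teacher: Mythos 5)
Your proposal is correct. Note that the paper itself does not prove Theorem~\ref{thm:bounds1} --- it is quoted from \cite{ZhC13J}, \cite{ZhW13} --- so the natural comparison is with the paper's proof of Theorem~\ref{thm:myopicbounds3}, and your argument is the same style of induction: both establish the recursion $f(G_{i+1}) \geq \frac{1}{K}f(O_K) + \left(1-\frac{1}{K}\right)f(G_i)$ and unroll it geometrically. The difference is in how the recursion is obtained: you append the full string $O_K$ to $G_i$, telescope over all $K$ actions $o_1,\ldots,o_K$, bound each increment by the greedy gain (giving the factor $K$), and then invoke the hypothesis $f(G_i\oplus O_K)\geq f(O_K)$ directly; the paper's Theorem~\ref{thm:myopicbounds3} instead appends only the postfix $\bar{O}_{K-i}$, telescopes over $K-i$ terms to get inequality \eqref{eqn:3}, and closes the gap with $K$-GO-concavity (precisely so that no string of length exceeding $K$ ever appears). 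Your choice is the right one for Theorem~\ref{thm:bounds1}, since there the hypotheses involve the longer strings $G_i\oplus O_K$ and full string submodularity, so the length-$>K$ concatenations are legitimate. Two small housekeeping points: your step $\delta_0\leq f(O_K)$ needs $f(\varnothing)\geq 0$, which is not stated explicitly in the paper but is implicitly used in its own proof (the telescoping $\sum_{i=1}^K (f(O_i)-f(O_{i-1}))=f(O_K)$ silently sets $f(O_0)=0$), so flagging the normalization as you did is appropriate; and your extension of the hypothesis to $i=0$ is harmless because $f(G_0\oplus O_K)=f(O_K)$ trivially, which is also what subsumes the paper's separate base-case estimate $f(G_1)\geq \frac{1}{K}f(O_K)$.
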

\begin{Theorem}
\label{thm:bounds2}
If $f$ is string submodular and postfix monotone, then any greedy strategy $G_K$ satisfies 
\begin{align*}
f(G_K) &\geq \frac{1}{\sigma}\left(1-\left(1-\frac{\sigma}{K}\right)^K\right)f(O_K)\\
&> \frac{1}{\sigma}(1-e^{-\sigma})f(O_K)\\
&> (1-e^{-1})f(O_K).
\end{align*}
\end{Theorem}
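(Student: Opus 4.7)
The plan is to derive a one-step linear recursion
\begin{equation*}
f(G_i)-f(\varnothing) \geq \left(1-\frac{\sigma}{K}\right)\bigl(f(G_{i-1})-f(\varnothing)\bigr) + \frac{1}{K}\bigl(f(O_K)-f(\varnothing)\bigr),
\end{equation*}
iterate it from the convention $G_0=\varnothing$ up to $G_K$, and then read off the two subsequent inequalities of the theorem by elementary analysis. The recursion fuses two bounds on $f(G_{i-1}\oplus O_K)$: an \emph{upper} bound from string submodularity together with the greedy choice, and a \emph{lower} bound from the total backward curvature $\sigma$.

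For the upper bound I would telescope $f(G_{i-1}\oplus O_K)-f(G_{i-1})=\sum_{j=1}^{K}\bigl[f(G_{i-1}\oplus O_j)-f(G_{i-1}\oplus O_{j-1})\bigr]$ and apply the diminishing-return property at the prefix $G_{i-1}\preceq G_{i-1}\oplus O_{j-1}$ to bound each summand by $f(G_{i-1}\oplus(o_j))-f(G_{i-1})$. The greedy definition of $g_i$ then bounds each of these by $f(G_i)-f(G_{i-1})$, giving $f(G_{i-1}\oplus O_K)-f(G_{i-1})\leq K\bigl[f(G_i)-f(G_{i-1})\bigr]$. This is the same step that underlies Theorem~\ref{thm:bounds1}.

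The key new ingredient is a curvature-based lower bound
\begin{equation*}
f(A\oplus B)-f(B)\geq(1-\sigma)\bigl(f(A)-f(\varnothing)\bigr) \quad \text{for all } A,B\in\bbA^*.
\end{equation*}
Writing $A=(a_1,\ldots,a_k)$ and telescoping ``from the inside out,'' $f(A\oplus B)-f(B)=\sum_{m=1}^{k}\bigl[f((a_m)\oplus M_m)-f(M_m)\bigr]$ with $M_m=(a_{m+1},\ldots,a_k)\oplus B$; the definition of $\sigma$ bounds each summand below by $(1-\sigma)(f((a_m))-f(\varnothing))$, and a second application of diminishing returns at $\varnothing\preceq(a_1,\ldots,a_{m-1})$ yields $f((a_m))-f(\varnothing)\geq f(A_m)-f(A_{m-1})$, which telescopes to $f(A)-f(\varnothing)$. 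Instantiating with $A=G_{i-1}$ and $B=O_K$ gives $f(G_{i-1}\oplus O_K)\geq f(O_K)+(1-\sigma)(f(G_{i-1})-f(\varnothing))$; note that postfix monotonicity is precisely what forces $\sigma\leq 1$, so the factor $1-\sigma$ is nonnegative. I expect this to be the main obstacle, since the curvature is stated only for a single prepended action, and promoting it to an arbitrary prefix string requires combining the peel-from-the-left telescoping with a second, orthogonal use of diminishing returns.

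Chaining the upper and lower bounds yields the advertised recursion (after the standard normalization $f(\varnothing)=0$ that is implicit in the curvature definition), and iteration with $f(G_0)=0$ produces the closed form $f(G_K)\geq\tfrac{1}{\sigma}\bigl(1-(1-\sigma/K)^K\bigr)f(O_K)$. The strict estimate $(1-\sigma/K)^K<e^{-\sigma}$ then produces the middle inequality, and the fact that $x\mapsto(1-e^{-x})/x$ is decreasing on $(0,1]$ combined with $\sigma\leq 1$ produces the final comparison with $(1-e^{-1})f(O_K)$.
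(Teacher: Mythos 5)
Your proposal is correct, but note that the paper itself never proves Theorem~\ref{thm:bounds2}: it is stated in Section~\ref{sc:II} purely as a review of results from \cite{ZhC13J} and \cite{ZhW13}, so there is no in-paper proof to compare against. Your argument is essentially the standard one from those references, and it runs structurally parallel to the paper's own proofs of Theorems~\ref{thm:myopicbounds3} and~\ref{thm:myopicbounds4}: the telescoping-plus-greedy step giving $f(G_{i-1}\oplus O_K)-f(G_{i-1})\leq K\bigl(f(G_i)-f(G_{i-1})\bigr)$ is exactly the mechanism behind inequality \eqref{eqn:3}, and your curvature lemma $f(A\oplus B)-f(B)\geq(1-\sigma)\bigl(f(A)-f(\varnothing)\bigr)$ plays the role that $K$-GO-concavity (resp.\ the curvature $\eta$) plays in the paper, supplying the lower bound on $f(G_{i-1}\oplus O_K)$ that closes the recursion. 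The lemma is argued correctly: the peel-from-the-left telescoping is valid, each summand is handled by the definition of $\sigma$, and you rightly flag both that postfix monotonicity forces $\sigma\leq 1$ (needed to multiply the diminishing-returns telescope by $1-\sigma$ without flipping the inequality) and that full, unrestricted string submodularity is what licenses diminishing returns on strings longer than $K$ here, unlike in the paper's $K$-restricted setting. Two conventions are used silently but are also implicit in the paper: $f(\varnothing)=0$ (the paper's proof of Theorem~\ref{thm:myopicbounds3} likewise telescopes $\sum_i\bigl(f(O_i)-f(O_{i-1})\bigr)$ to $f(O_K)$) and $\sigma\in(0,1]$ so that $1/\sigma$ is meaningful; also, at $\sigma=1$ the final displayed inequality becomes an equality, a strictness blemish inherited from the statement rather than a flaw in your argument.
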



Under additional assumptions on the curvature $\sigma$ of $f$, \cite{ZhC13J} and \cite{ZhW13} provide even tighter bounds. Notice that the sufficient conditions above involve strings of length greater than $K$, even though the problem (\ref{eqn:2}) involves only strings up to length $K$. This motivates a weakening of these sufficient conditions to involve only strings of length at most $K$, but still preserving the bounds here. In the next section, we present our main results along these lines. In Section~\ref{sc:IV}, we show that these weakened sufficient conditions also lead to weaker requirements than in \cite{ZhC13J} and \cite{ZhW13} for two application examples.

\section{Main Results}\label{sc:III}

Before stating our main results, we first introduce some definitions on
$f: \bbA^*\to \bbR$.
\begin{itemize}
\item[i] $f$ is \emph{$K$-monotone} if
$\forall M, N \in \bbA^*,$ and $|M|+|N|\leq K$,  $f(M\oplus N)\geq f(M)$.

\item[ii.] $f$ is $K$-\emph{diminishing} if
$\forall M\preceq N \in \bbA^*$ and $|N|\leq K-1$, $\forall a\in \bbA$, $f(M\oplus (a))-f(M) \geq f(N\oplus (a))-f(N)$.

\item[iii.] $f$ is $K$-\emph{submodular} if it is both $K$-monotone and $K$-diminishing. 

\item[iv.] Let $G_i=(g_1,\ldots, g_i)$ (as before) and $\bar{O}_{K-i}=(o_{i+1},\ldots, o_K)$ for $i=1,\ldots, K$. Then, $f$ is $K$-\emph{GO-concave} if for $1\leq i\leq K-1$, 
\[
f(G_i\oplus \bar{O}_{K-i})\geq \frac{i}{K}f(G_i) + \left(1-\frac{i}{K}\right)f(O_K).
\]
\end{itemize}

Notice that these definitions involve only strings of length at most $K$.
Moreover, it is clear that if $f$ is string submodular, prefix monotone, and has the diminishing-return property, then $f$ is string $K$-submodular, $K$-monotone, and $K$-diminishing. Under these weaker conditions, we show that the previous bounds on the greedy strategy still hold.

\begin{Theorem}
\label{thm:myopicbounds3}
If $f$ is $K$-submodular and $K$-GO-concave,
then 
\[
f(G_K) \geq \left(1-\left(1-\frac{1}{K}\right)^K\right)f(O_K)>(1-e^{-1})f(O_K).
\]
\end{Theorem}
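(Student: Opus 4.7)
The plan is to adapt the classical Nemhauser--Wolsey--Fisher argument (and its string-submodular extension in \cite{ZhC13J},\cite{ZhW13}) to the finite-horizon setting, being careful that every inequality I invoke touches only strings of length at most $K$. Define the gap at stage $i$ by $\Delta_i := f(O_K)-f(G_i)$ for $i=0,1,\ldots,K$. The target is the one-step recursion $\Delta_{i+1}\leq(1-1/K)\Delta_i$, whose $K$-fold iteration gives $\Delta_K\leq(1-1/K)^K\Delta_0$, and hence, after using the standard normalization $f(\emptyset)\geq 0$, the stated bound.

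For $1\leq i\leq K-1$, I first telescope
\[
f(G_i\oplus \bar{O}_{K-i})-f(G_i)=\sum_{j=1}^{K-i}\Bigl[f\bigl(G_i\oplus(o_{i+1},\ldots,o_{i+j})\bigr)-f\bigl(G_i\oplus(o_{i+1},\ldots,o_{i+j-1})\bigr)\Bigr].
\]
Each summand matches the hypothesis of $K$-diminishing with $M=G_i$, $N=G_i\oplus(o_{i+1},\ldots,o_{i+j-1})$, and appended action $o_{i+j}$; crucially $|N|=i+j-1\leq K-1$ throughout $1\leq j\leq K-i$, so the length clause is met. Hence each summand is at most $f(G_i\oplus(o_{i+j}))-f(G_i)$, which by the greedy selection rule is in turn at most $f(G_{i+1})-f(G_i)$. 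This yields $f(G_i\oplus \bar{O}_{K-i})-f(G_i)\leq(K-i)\bigl[f(G_{i+1})-f(G_i)\bigr]$. Now apply $K$-GO-concavity to the left-hand side:
\[
\frac{K-i}{K}\bigl[f(O_K)-f(G_i)\bigr]\leq (K-i)\bigl[f(G_{i+1})-f(G_i)\bigr],
\]
which rearranges to $\Delta_{i+1}\leq(1-1/K)\Delta_i$. The base case $i=0$ is handled by the same telescoping applied to $f(O_K)-f(\emptyset)$ and the greedy rule at stage $1$; it gives $\Delta_1\leq(1-1/K)\Delta_0$ without needing $K$-GO-concavity (which is trivial at $i=0$ anyway).

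Iterating the recursion $K$ times, and using the standard bound $(1-1/K)^K<e^{-1}$, delivers both inequalities in the theorem. The substantive point — and the reason the weakened hypotheses suffice — is that the argument never probes strings of length greater than $K$: the longest intermediate string appearing in the telescoping has length exactly $K-1$ before its final action is appended (matching the ``$|N|\leq K-1$'' clause of $K$-diminishing), and the left-hand side of $K$-GO-concavity involves the string $G_i\oplus \bar{O}_{K-i}$ of length exactly $K$. I anticipate no deep obstacle; the main care needed is bookkeeping around the $i=0$ endpoint and verifying the length constraint at each invocation of $K$-diminishing, which is precisely where the classical proof would have silently used inequalities at strings of length $>K$.
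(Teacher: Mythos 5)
Your proof is correct and follows essentially the same route as the paper's: telescoping with $K$-diminishing plus the greedy rule to get $f(G_i\oplus\bar{O}_{K-i})-f(G_i)\leq (K-i)\bigl(f(G_{i+1})-f(G_i)\bigr)$, then $K$-GO-concavity to obtain the linear recursion, iterated $K$ times. The only cosmetic differences are your gap-variable formulation and your $i=0$ base case using $f(\emptyset)\geq 0$, where the paper instead proves $f(G_1)\geq f(O_K)/K$ directly (and in doing so implicitly uses the same normalization of $f(\emptyset)$).
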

\begin{proof}
Because $f$ is $K$-diminishing, we have that for $1\leq i\leq K$,
\[
f(o_i)\geq f(O_i)-f(O_{i-1}).
\]
By definition of the greedy strategy, for $1\leq i\leq K$,
\[
f(G_1)\geq f(o_i)\geq f(O_i)-f(O_{i-1}).
\]
Summing the inequality above over $i$ from $1$ to $K$ produces
\begin{align*}
\sum\limits_{i=1}^Kf(G_1)&\geq \sum\limits_{i=1}^K(f(O_i)-f(O_{i-1}))\\
\Rightarrow\quad Kf(G_1)&\geq f(O_K)\\
\Rightarrow\quad f(G_1)&\geq \frac{1}{K}f(O_K).
\end{align*}
For $1\leq i\leq K-1$, because $f$ is $K$-diminishing, we have 
\begin{align*}
f&(G_i\oplus o_K)-f(G_i)\\
&\geq f(G_i\oplus (o_{i+1},\cdots, o_K))-f(G_i\oplus(o_{i+1},\cdots, o_{K-1})),\\
f&(G_i\oplus o_{K-1})-f(G_i)\\
&\geq f(G_i\oplus (o_{i+1},\cdots, o_{K-1}))-f(G_i\oplus(o_{i+1},\cdots, o_{K-2})),\\
&\vdots\\
f&(G_i\oplus o_{i+2})-f(G_i)\\
&\geq f(G_i\oplus (o_{i+1},o_{i+2}))-f(G_i\oplus(o_{i+1})),\\
f&(G_i\oplus o_{i+1})-f(G_i)\geq f(G_i\oplus (o_{i+1}))-f(G_i)).
\end{align*}
Summing the inequalities above, we have 
\[\sum\limits_{j={i+1}}^K(f(G_i\oplus o_j)-f(G_i))\geq f(G_i\oplus\bar{O}_{K-i})-f(G_i).\]
By definition of the greedy strategy, we have for $i+1\leq j\leq K,$
\[f(G_{i+1})-f(G_i)\geq f(G_i\oplus o_j)-f(G_i),\] which implies that
\[(K-i)(f(G_{i+1})-f(G_i))\geq \sum\limits_{j={i+1}}^K(f(G_i\oplus o_j)-f(G_i)).\]
Hence, we have for $1\leq i\leq K-1$,
\begin{equation}
\label{eqn:3}
f(G_{i+1})-f(G_i)\geq\frac{1}{K-i} (f(G_i\oplus\bar{O}_{K-i})-f(G_i)).
\end{equation}
By $K$-GO-concavity, for $1\leq i\leq K-1$ we have 
\begin{align*}
f(G_{i+1})&-f(G_i)\\
&\geq \frac{1}{K-i}(f(G_i\oplus \bar{O}_{K-i})-f(G_i))\\
&\geq \frac{1}{K-i}\left(\frac{K-i}{K}f(O_K)+\frac{i}{K}f(G_i)-f(G_i)\right)\\
&= \frac{1}{K}(f(O_K)-f(G_i)),
\end{align*} from which we get
\[
f(G_{i+1}) \geq \frac{1}{K} f(O_K) + \left(1-\frac{1}{K}\right) f(G_i).
\]
Therefore,
\begin{align*}
f(G_K)&\geq \frac{1}{K}f(O_K)+\left(1-\frac{1}{K}\right)f(G_{K-1})\\
&\quad\vdots \\
&\geq \frac{1}{K}f(O_K)\sum\limits_{i=0}^{K-1}\left(1-\frac{1}{K}\right)^i\\
&=\left(1-\left(1-\frac{1}{K}\right)^K\right)f(O_K).
\end{align*}
Because $1-\left(1-\frac{1}{K}\right)^K\searrow 1-e^{-1}$ as $K\rightarrow \infty$, we also have
\[
f(G_K) \geq \left(1-\left(1-\frac{1}{K}\right)^K\right)f(O_K)>(1-e^{-1})f(O_K).
\]
\end{proof}

Next, we introduce a new notion of curvature $\eta$ as follows:
\begin{align*}
\eta&=\max\limits_{1\leq i\leq K-1}\\
&\left\{\frac{Kf(G_i)-(Kf(G_i\oplus \bar{O}_{K-i})-(K-i)f(O_K))}{(K-i)f(G_i)}\right\}.
\end{align*}
If $f$ is $K$-GO-concave, then for $1\leq i\leq K-1$ we have 
\begin{align*}
Kf(G_i)-&(Kf(G_i\oplus \bar{O}_{K-i})-(K-i)f(O_K))\\
&\leq Kf(G_i)- if(G_i)\\
&=(K-i)f(G_i),
\end{align*}
which implies that $\eta\leq 1$.
The following theorem gives a bound related to the curvature $\eta$.
\begin{Theorem}
\label{thm:myopicbounds4}
If $f$ is $K$-submodular and $K$-GO-concave, then
\begin{align*}
f(G_K) &\geq \frac{1}{\eta}\left(1-\left(1-\frac{\eta}{K}\right)^K\right)f(O_K)\\
&>\frac{1}{\eta}(1-e^{-\eta})f(O_K).
\end{align*}
\end{Theorem}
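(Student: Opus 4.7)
The plan is to follow the same skeleton as the proof of Theorem~\ref{thm:myopicbounds3}, but replace the use of $K$-GO-concavity with the (stronger) inequality that the definition of $\eta$ provides. The only genuinely new ingredient is extracting a clean lower bound on $f(G_i\oplus\bar O_{K-i})$ from the definition of $\eta$; everything else is re-running the recursion in the earlier proof with the improved constants.

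First I would reuse inequality \eqref{eqn:3} verbatim: since it only relies on $K$-submodularity and the greedy choice rule, we still have, for $1\leq i\leq K-1$,
\[
f(G_{i+1}) - f(G_i) \geq \frac{1}{K-i}\bigl(f(G_i\oplus\bar O_{K-i}) - f(G_i)\bigr).
\]
Next, I would rearrange the definition of $\eta$. Since $\eta$ is the maximum over $i$ of the indicated quantity, for every $1\leq i\leq K-1$ we have $\eta(K-i)f(G_i) \geq Kf(G_i) - \bigl(Kf(G_i\oplus\bar O_{K-i}) - (K-i)f(O_K)\bigr)$, which after dividing by $K$ and collecting terms becomes
\[
f(G_i\oplus\bar O_{K-i}) \geq \frac{K-i}{K}f(O_K) + \left(1-\frac{\eta(K-i)}{K}\right)f(G_i).
\]

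I would then substitute this lower bound into \eqref{eqn:3}. The $f(G_i)$ terms telescope nicely and leave the clean recursion
\[
f(G_{i+1}) \geq \frac{1}{K}f(O_K) + \left(1-\frac{\eta}{K}\right)f(G_i),\qquad 1\leq i\leq K-1.
\]
The base case $f(G_1)\geq \tfrac{1}{K}f(O_K)$ is exactly what was proved at the beginning of the proof of Theorem~\ref{thm:myopicbounds3} (it uses only $K$-diminishing and the greedy rule, so it carries over). Unrolling the recursion gives a geometric sum:
\[
f(G_K) \geq \frac{1}{K}f(O_K)\sum_{j=0}^{K-1}\left(1-\frac{\eta}{K}\right)^j = \frac{1}{\eta}\left(1-\left(1-\frac{\eta}{K}\right)^K\right)f(O_K).
\]
Finally I would invoke the standard fact that $(1-\eta/K)^K$ is strictly less than $e^{-\eta}$ for finite $K$, yielding $f(G_K) > \tfrac{1}{\eta}(1-e^{-\eta})f(O_K)$.

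I do not expect a real obstacle: the hardest part is just being careful with the algebraic rearrangement of the $\eta$-definition and checking the sign of $1-\eta(K-i)/K$. The argument that $\eta\leq 1$ was already verified before the theorem statement, so the recurrence coefficient $1-\eta/K$ lies in $[1-1/K,1)$ and the geometric sum closes without issue. Notably, this proof specializes correctly: setting $\eta=1$ recovers Theorem~\ref{thm:myopicbounds3}.
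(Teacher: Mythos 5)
Your proposal is correct and follows essentially the same route as the paper: reuse inequality \eqref{eqn:3}, rearrange the definition of $\eta$ into a lower bound on $f(G_i\oplus\bar O_{K-i})$, derive the recursion $f(G_{i+1})\geq \tfrac{1}{K}f(O_K)+(1-\tfrac{\eta}{K})f(G_i)$, and unroll it with the base case $f(G_1)\geq \tfrac{1}{K}f(O_K)$ to get the geometric sum. The only cosmetic difference is that you justify the strict inequality via $(1-\eta/K)^K<e^{-\eta}$ directly, while the paper cites the monotone decrease of the bound to its limit as $K\to\infty$.
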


\begin{proof}
By definition of the curvature $\eta$, we have 
\[
f(G_i\oplus \bar{O}_{K-i})-f(G_i)\geq \frac{K-i}{K}(f(O_K)-\eta f(G_i)).
\]
By definition of the greedy strategy and inequality (\ref{eqn:3}),

 we have 
\begin{align*}
f(G_{i+1})-f(G_i)&\geq \frac{1}{K-i}(f(G_i\oplus \bar{O}_{K-i})-f(G_i))\\
&\geq\frac{1}{K-i}\cdot\frac{K-i}{K}(f(O_K)-\eta f(G_i))\\
&= \frac{1}{K}(f(O_K)-\eta f(G_i)),
\end{align*}
from which we get
\[
f(G_{i+1}) \geq \frac{1}{K} f(O_K) + \left(1-\frac{\eta}{K}\right) f(G_i).
\]
Therefore,
\begin{align*}
f(G_K)&\geq \frac{1}{K}f(O_K)+\left(1-\frac{\eta}{K}\right)f(G_{K-1})\\
&\quad\vdots \\
&\geq \frac{1}{K}f(O_K)\sum\limits_{i=0}^{K-1}\left(1-\frac{\eta}{K}\right)^i\\
&=\frac{1}{\eta}\left(1-\left(1-\frac{\eta}{K}\right)^K\right)f(O_K).
\end{align*}
Because $\frac{1}{\eta}\left(1-\left(1-\frac{\eta}{K}\right)^K\right)\searrow \frac{1}{\eta}(1-e^{-\eta})$ as $K\rightarrow \infty$, we also have
\begin{align*}
f(G_K) &\geq \frac{1}{\eta}\left(1-\left(1-\frac{\eta}{K}\right)^K\right)f(O_K)\\
&>\frac{1}{\eta}(1-e^{-\eta})f(O_K).
\end{align*}
\end{proof}

\emph{Remarks}

\begin{itemize}
\item The term $\frac{1}{\eta}(1-e^{-\eta})$ is decreasing in $\eta\in(0,1]$.
\item When $\eta=1$, $\frac{1}{\eta}(1-e^{-\eta})=1-e^{-1}$, which is the bound in Theorem \ref{thm:myopicbounds3}.
Moreover, for $0<\eta<1$, $\frac{1}{\eta}(1-e^{-\eta})>1-e^{-1}$.
Hence, Theorem \ref{thm:myopicbounds4} is a generalization of Theorem \ref{thm:myopicbounds3} and gives a tighter bound.
\item When $\eta\rightarrow 0,$ we have $\frac{1}{\eta}(1-e^{-\eta})\rightarrow 1$, making the greedy strategy asymptotically optimal.
\end{itemize}

\section{Applications}\label{sc:IV}

In this section, we \ap{consider two example applications, namely task assignment and adaptive measurement design, to illustrate the strength of our results. In each case, we derive sufficient conditions, on the parameter values of the problem, for the greedy strategy to achieve the $(1-(1-1/K)^K)$ bound. These sufficient conditions are weaker than those we previously reported in \cite{ZhC13J}.}

\subsection{Task Assignment Problem}

\ap{As our first example application, we consider the task
assignment problem that was posed in \cite{streeter2008online} and
was further analyzed in \cite{ZhC13J}. In this problem, we have $n$
subtasks and a set $\mathbb{A}$ of $K$ agents. At each stage, we get
to assign a subtask to an agent, who accomplishes the task with some
probability. Let $p_{i}^j(a)$ denote the probability of
accomplishing subtask $i$ at stage $j$ when it is assigned to agent
$a\in \mathbb{A}$. Assume that $p_{i}^{j}(a)\in [L_i(a),U_i(a)]$,
$0<L_i(a)<U_i(a)< 1$, and that the limits of the interval are
independent of the stage in which subtask $i$ is assigned to agent
$a$. Let $X_i(a_1,a_2,\ldots,a_k)$ denote the random variable that
describes whether or not subtask $i$ has been accomplished after the
sequence of assignments $(a_1,a_2,\ldots,a_k)$ over $k$ steps. Then,
$\frac{1}{n}\sum_{i=1}^{n} X_i(a_1,a_2,\ldots,a_k)$ is the fraction
of subtasks accomplished by employing agents
$(a_1,a_2,\ldots,a_k)$ over $k$ steps. The objective function $f$
for this problem is the expected value of this fraction, which can be written as 
\begin{align*}
f((a_1,\ldots, a_k))=\frac{1}{n}\sum\limits_{i=1}^n\left(1-\prod\limits_{j=1}^k\left(1-p_i^j(a_j)\right)\right).
\end{align*} 
We wish to derive sufficient conditions on the set of parameters
$\{(L(a),U(a)) \ | a\in \mathbb{A}\}$ so that $f$ is $K$-monotone, $K$-diminishing, and $K$-GO-concave.

For simplicity, we consider the case of $n=1$. But our results can easily be generalized to the case where $n>1$. For $n=1$, the objective function $f$ reduces to}   
 \begin{equation}\label{eq:Ex1fn1}
f((a_1,\ldots, a_k))=1-\prod\limits_{j=1}^k(1-p_1^j(a_j)),
\end{equation} 
\ap{and from here on we simply use $p^j(a_j)$ in place of $p_1^j(a_j)$.} 

It is easy to check that $f$ is \ap{$K$-monotone. For $f$ to be $K$-diminishing, it suffices to have}
\[
f(M\oplus (a))-f(M)\geq f(M\oplus (b)\oplus (a))-f(M\oplus (b)),
\]
\ap{for any $a,b\in\mathbb{A}$ and for any $M\in\mathbb{A}^*$ with $|M|\le K-2$.} 
Let $M=(a_1,\ldots, a_m)$, then we have  
\[p^{m+1}(a)\geq (1-p^{m+1}(b))p^{m+2}(a).\]
Suppose that $L(a)\leq p^j(a)\leq U(a)$ for all $a\in \mathbb{A}, j=1,2,\ldots, K.$ Let 
\[
\hat{U}=\max\limits_{a\in\mathbb{A}}U(a)\]
and 
\[
\hat{L}=\min\limits_{a\in\mathbb{A}}L(a).
\]
Then, we can write 
\[
p^{m+1}(a)\geq L(a)\geq \hat{L}
\]
and 
\[
(1-p^{m+1}(b))p^{m+2}(a)\leq (1-L(b))U(a)\leq (1-\hat{L})\hat{U}.
\] 
\ap{Thus, a sufficient condition for $f$ to be $K$-diminishing is}
\begin{equation}\label{eqn:4}
\hat{L}\geq (1-\hat{L})\hat{U}.
\end{equation}

\ap{Now, let us rearrange the $K$-GO-concavity condition as}
\begin{align*}
(K-i)(f(O_K)-f(G_i\oplus \bar{O}_{K-i})) \quad\quad\quad \\
\leq i(f(G_i\oplus \bar{O}_{K-i})-f(G_i)).
\end{align*}
\ap{Replacing for $f$ from \eqref{eq:Ex1fn1} gives (after simplifying)} 
\begin{align*}
&(K-i)\prod_{j=i+1}^K(1-p^j(o_i))\left[1-\frac{\prod_{j=1}^i(1-p^j(o_i))}{\prod_{j=1}^i(1-p^j(g_j))}\right]\\
&\leq i\left[1-\prod_{j=i+1}^K(1-p^j(o_i))\right].
\end{align*}
Because $f(O_K)\geq f(G_i\oplus \bar{O}_{K-i})$, we have 
\ap{
\[
\frac{\prod_{j=1}^i(1-p^j(o_i))}{\prod_{j=1}^i(1-p^j(g_j))}\leq 1.
\]
Therefore, to have $K$-GO-concavity it suffices to have} 
\[
(K-i)\prod_{j=i+1}^K(1-p^j(o_i))\leq
i\left[1-\prod_{j=i+1}^K(1-p^j(o_i))\right],
\]
\ap{or equivalently}
\begin{equation}\label{eq:Ex1GO}
\prod_{j=i+1}^K(1-p^j(o_i))\leq \frac{i}{K}, 
\end{equation}
for $1\leq i\leq K-1$. If we assume that $p^i(o_i)\geq \frac{1}{i}$ for $2\leq i\leq K$, then it is easy to see \ap{that \eqref{eq:Ex1GO}} holds for $1\leq i\leq K-1.$ \ap{Thus, a sufficient condition for $K$-GO-concavity is} 
\begin{equation}
\label{eqn:Ex1Suff}
\hat{L}\geq \frac{1}{2}.
\end{equation}
\ap{If \eqref{eqn:Ex1Suff} holds then \eqref{eqn:4} also holds. Thus, \eqref{eqn:Ex1Suff} is sufficient for the greedy strategy to achieve the $(1-(1-\frac{1}{K})^K)$ bound.}

\ap{
\begin{Remark}
The sufficient condition in \cite{ZhC13J} requires \eqref{eqn:4} and 
\begin{equation}\label{eqn:Ex1Extra}
p^1(g_1)\geq 1-c^K,
\end{equation}
where 
\[
c=\min\limits_{a\in\mathbb{A}}\frac{1-U(a)}{1-L(a)}.
\]
When all $p^j(a_j)\ge 1/2$, then \eqref{eqn:Ex1Suff} and \eqref{eqn:4} automatically hold, but \eqref{eqn:Ex1Extra} is not necessarily satisfied. In that sense, the $K$-monotone, $K$-diminishing, and $K$-Go concavity conditions are weaker sufficient conditions for achieving the $(1-(1-\frac{1}{K})^K)$ bound than the prefix monotone, diminishing-return, and postfix monotone conditions of \cite{ZhC13J}.
\end{Remark}
}


%
%
%
%

\subsection{Adaptive Measurement Problem}

\ap{As our second example application, we consider the adaptive measurement design problem posed in \cite{LiC12} and \cite{ZhC13J}. Consider a signal of interest $x \in \mathbb{R}^2$ with normal prior distribution $\mathcal{N} (0, I)$, where $I$ is the $2\times 2$ identity matrix; our analysis easily generalizes to dimensions larger than $2$. Let $\mathbb{A}=\{\mathrm{Diag}\left(\sqrt{e},\sqrt{1-e}\right) \ : \ e\in[0.5,1]\}$. At each stage $i$, we make a measurement $y_i$ of the form} 
\[
y_i=A_ix+w_i, 
\]
\ap{where $A_i \in\mathbb{A}$ and $w_i$ is a Gaussian measurement noise vector with mean zero and covariance $\sigma_i^2 I$. 

The objective is to choose a string of measurement matrices $\{A_i\}_{i=1}^k$ with $k\le K$ to maximize the information gain:
\[
f((a_1,\ldots, a_k))=H_0-H_k.
\]
Here $H_0=\frac{N}{2}\text{log}(2\pi e)$ is the entropy of the prior
distribution of $x$ and $H_k$ is the entropy of the posterior
distribution of $x$ given $\{y_i\}_{i=1}^k$; that is,
\[
H_k=\frac{1}{2}\text{log det}(P_k)+\frac{N}{2}\text{log}(2\pi e),
\] 
where 
\[
P_k=\left(P_{k-1}^{-1}+\frac{1}{\sigma_k^2}A_k^TA_k\right)^{-1}
\]
is the posterior covariance of $x$ given $\{y_i\}_{i=1}^k$ \cite{LiC12}.

We wish to derive sufficient conditions on the set of parameters
$\{\sigma_i^2\}_{i=1}^K$ so that $f$ is $K$-monotone,
$K$-diminishing, and $K$-GO-concave.} It is easy to see that $f$ is
\ap{$K$-monotone} by form, and it is \ap{$K$-diminishing} if
\ap{$\{\sigma_i^2\}_{i=1}^{K}$ is a non-decreasing sequence, that is,
\begin{equation}
\label{eqn:6}
\sigma_{i+1}^2\geq \sigma_i^2, \ \ \mathrm{for} \ i=1,2,\ldots, K-1.
\end{equation}

Let $A_i^g=\text{Diag}(\sqrt{e_i}, \sqrt{1-e_i})$ and
$A_i^*=\text{Diag}(\sqrt{e_i^*}, \sqrt{1-e_i^*})$ be the greedy and
optimal actions at stage $i$, respectively; that is, $g_i=A_i^g$ and $o_i=A_i^*$. Then, the $K$-GO-concavity condition for this problem is that} for $1\leq i \leq K-1$, we must have 
\begin{align*}
&(S_{i}^*+\bar{S}_{K-i}^*)^{K-i}(c_K-(S_{i}^*+\bar{S}_{K-i}^*))^{K-i}S_i^i(a_i-S_i)^i\\
&\leq (S_{i}+\bar{S}_{K-i}^*)^K(c_K-(S_{i}+\bar{S}_{K-i}^*))^K,\numberthis\label{eqn:7}
\end{align*}
where 
\begin{align*}
S_{i}^*=&1+\sum\limits_{j=1}^{i}\frac{1}{\sigma_j^2}e_j^*,\\
\bar{S}_{K-i}^*=&\sum\limits_{j=i+1}^{K}\frac{1}{\sigma_j^2}e_j^*,\\
S_{i}=&1+\sum\limits_{j=1}^{i}\frac{1}{\sigma_j^2}e_j,\\
a_i=&2+\sum\limits_{j=1}^i\frac{1}{\sigma_j^2},\\ 
c_K=&2+\sum\limits_{j=1}^K\frac{1}{\sigma_j^2}.
\end{align*}
\ap{Because} $f(O_K)\geq f(G_i\oplus \bar{O}_{K-i})$, \ap{we have}
\begin{align*}
(S_{i}^*+\bar{S}_{K-i}^*)(c_K-(S_{i}^*+\bar{S}_{K-i}^*))\quad\quad\quad\\
\geq (S_{i}+\bar{S}_{K-i}^*)(c_K-(S_{i}+\bar{S}_{K-i}^*)).
\end{align*}
It is easy to check that 
\[
S_i(a_i-S_i)\leq (S_{i}+\bar{S}_{K-i}^*)(c_K-(S_{i}+\bar{S}_{K-i}^*)).
\]
Therefore, we have 
\begin{equation}
\label{eqn:8}
S_i(a_i-S_i)\leq (S_{i}^*+\bar{S}_{K-i}^*)(c_K-(S_{i}^*+\bar{S}_{K-i}^*)).
\end{equation}

Let \[g(i)=(S_{i}^*+\bar{S}_{K-i}^*)^{K-i}(c_K-(S_{i}^*+\bar{S}_{K-i}^*))^{K-i}S_i^i(a_i-S_i)^i.\] 
Then,
\[\frac{g(i+1)}{g(i)}=\frac{S_i(a_i-S_i)}{(S_{i}^*+\bar{S}_{K-i}^*)(c_K-(S_{i}^*+\bar{S}_{K-i}^*))}.\]
\ap{By (\ref{eqn:8}), $g(i)$} is non-increasing. Hence, \ap{it suffices to have}   
\begin{align*}
&(S_{1}^*+\bar{S}_{K-1}^*)^{K-1}(c_K-(S_{1}^*+\bar{S}_{K-1}^*))^{K-1}S_1(a_1-S_1)\\
&\leq (S_{1}+\bar{S}_{K-1}^*)^K(c_K-(S_{1}+\bar{S}_{K-1}^*))^K
\numberthis\label{eq:9}
\end{align*}
in order to \ap{get $K$-GO-concavity.}


Let $T_1=a_1-S_1$, $T_1^*=a_1-S_1^*,$ and $\bar{T}_{K-1}^*=\sum\limits_{j=2}^K\frac{1-e_j^*}{\sigma_j^2}$. Then, we can rewrite (\ref{eq:9}) as 
\begin{align*}
(S_{1}^*+\bar{S}_{K-1}^*)^{K-1}(T_1^*+\bar{T}_{K-1}^*))^{K-1}S_1T_1 \quad\quad\quad\\
\leq (S_{1}+\bar{S}_{K-1}^*)^K(T_1+\bar{T}_{K-1}^*)^K.
\numberthis \label{eqn:11}
\end{align*}
\ap{If}  
\begin{align}
(S_{1}^*+\bar{S}_{K-1}^*)(T_1^*+\bar{T}_{K-1}^*))\quad\quad\quad\nonumber\\
= (S_{1}+\bar{S}_{K-1}^*)(T_1+\bar{T}_{K-1}^*)),
\label{eqn:12}
\end{align}
\ap{that is, to have $f(O_1)=f(G_1)$,  then (\ref{eqn:11}) always holds, because} $S_1T_1\leq (S_{1}+\bar{S}_{K-1}^*)(T_1+\bar{T}_{K-1}^*)).$ 

\ap{We now show that \eqref{eqn:12} always holds, given the action set $\mathbb{A}$ considered in this example. In other words, the $K$-GO-concavity condition is satisfied and this means that \eqref{eqn:6} is a sufficient condition for achieving the $(1-(1-\frac{1}{K})^K)$} bound.  


By definition of the greedy strategy, we have $f(G_1)\geq f(O_1)$, which means
\begin{align*}
\left(1+\frac{1}{\sigma_1^2}e_1\right)\left(1+\frac{1}{\sigma_1^2}(1-e_1)\right) \quad\quad \quad\\
\geq
\left(1+\frac{1}{\sigma_1^2}e_1^*\right)\left(1+\frac{1}{\sigma_1^2}(1-e_1^*)\right).
\end{align*}
\ap{Simplifying the above inequality gives}
\begin{equation}\label{eqn:13}
(e_1-e_1^*)(1-(e_1+e_1^*))\geq 0.
\end{equation}
\ap{Because} $f(O_K)\geq f(G_1\oplus \bar{O}_{k-1})$, \ap{we have}  
\begin{align*}
\left(1+\frac{e_1^*}{\sigma_1^2}+\sum\limits_{j=2}^K\frac{e_j^*}{\sigma_j^2}\right)&\left(1+\frac{(1-e_1^*)}{\sigma_1^2}+\sum\limits_{j=2}^K\frac{(1-e_j^*)}{\sigma_j^2}\right)\\
&\geq\\
\left(1+\frac{e_1}{\sigma_1^2}+\sum\limits_{j=2}^K\frac{e_j^*}{\sigma_j^2}\right)&\left(1+\frac{(1-e_1)}{\sigma_1^2}+\sum\limits_{j=2}^K\frac{(1-e_j^*)}{\sigma_j^2}\right),
\end{align*}
which implies that
\begin{align*}
(e_1-e_1^*)&\left[\sum\limits_{j=2}^K\frac{1}{\sigma_j^2}(2e_j^*-1)\right]\\
&\geq\frac{1}{\sigma_1^2}(e_1-e_1^*)(1-(e_1+e_1^*)).
\numberthis\label{eqn:14}
\end{align*}

The inequality (\ref{eqn:13}) implies that $e_1\leq e_1^*$.
From (\ref{eqn:14}) and (\ref{eqn:13}), we have that 
\[
(e_1-e_1^*)\left[\sum\limits_{j=2}^K\frac{1}{\sigma_j^2}(2e_j^*-1)\right]\geq
0,\]
which implies that $e_1=e_1^*$.
Since if $e_1\neq e_1^*$, then $e_1< e_1^*,$ which implies that
\[
(e_1-e_1^*)\left[\sum\limits_{j=2}^K\frac{1}{\sigma_j^2}(2e_j^*-1)\right]< 0,
\]
while 
\[
\frac{1}{\sigma_1^2}(e_1-e_1^*)\left(1-(e_1+e_1^*)\right)>0,
\]
which contradicts (\ref{eqn:14}). Hence, we have $e_1=e_1^*$, which means $G_1=O_1$, and the inequality \eqref{eqn:12} holds.

%

\ap{
\begin{Remark} The sufficient condition in \cite{ZhC13J} for achieving the $(1-(1-\frac{1}{K})^K)$ bound in this problem requires both \eqref{eqn:6} and 
\begin{equation}
\frac{b^{-2}}{a^{-2}-b^{-2}}\geq \frac{(2K-2)^2}{4}(a^{-2}+b^{-2})+1,
\label{eqn:17}
\end{equation}
where $[a,b]$ is the interval that contains all the $\sigma_i$s. Therefore, the condition derived in this paper is a weaker sufficient condition than that obtained in \cite{ZhC13J}.  
\end{Remark}
}

\section{Conclusion}\label{sc:V}

We considered an optimization problem (\ref{eqn:2}) where the decision variable is a string of length at most $K$. For this problem, we reviewed some previous results on bounding the greedy strategy. In particular, the results of \cite{ZhC13J} and \cite{ZhW13} provide sufficient conditions for the greedy strategy to be bounded by a factor of $(1-(1-1/K)^K)$. We then presented \emph{weakened} sufficient conditions for this same bound to hold, by introducing the notions of $K$-submodularity and $K$-GO-concavity. Next, we introduced a notion of \emph{curvature} $\eta\in(0,1]$, which furnishes an even tighter bound with the factor $\frac{1}{\eta}(1-e^{-\eta})$. Finally, we illustrated our results by considering two example applications. We showed that our new results provide weaker conditions on parameter values in these applications than in \cite{ZhC13J} and \cite{ZhW13}.

\bibliographystyle{IEEEbib}

\begin{thebibliography}{10}




\bibitem{Bertsekas2000}
D.~P. Bertsekas, \emph{Dynamic Programming and Optimal Control}.\hskip 1em plus
  0.5em minus 0.4em\relax Athena Scientific, 2000.

\bibitem{Pow07}
W. B. Powell, \emph{Approximate Dynamic Programming: Solving the Curses of Dimensionality}. Hoboken, NJ: J. Wiley \& Sons, 2007.

\bibitem{streeter2008online}
M.~Streeter and D.~Golovin, ``An online algorithm for maximizing submodular
  functions,'' in \emph{Proc. 22nd Annual Conference on Neural Information
  Processing Systems}, Vancouver, British Columbia, Canada, pp.~67-74, Dec. 2008.
 \bibitem{alaei2010maximizing}
S.~Alaei and A.~Malekian, ``Maximizing sequence-submodular functions and its
  application to online advertising,'' in \emph{arXiv preprint arXiv:1009.4153},
  2010.
  \bibitem{ZhC13J}
Z. Zhang, E. K. P. Chong, A. Pezeshki, and W. Moran,
``String submodular functions with curvature constraints,''
\emph{IEEE Transactions on Automatic Control}, to appear, \emph{arXiv:1303.3017}, 2015.
\bibitem{ZhW13}
Z. Zhang, Z. Wang, E. K. P. Chong, A. Pezeshki, and W. Moran,
``Near optimality of greedy strategies for string
submodular functions with forward and backward curvature
constraints,''
in \emph{Proc. 52nd IEEE Conference on Decision and Control},
Florence, Italy, pp.~5156--5161, Dec. 2013.
\bibitem{nemhauser1978}
G.~L. Nemhauser and L.~A. Wolsey and M.~L. Fisher, ``An analysis of approximations for maximizing submodular set functions-{II},'' \emph{Mathematical Programming Study}, vol.~8, no.~1, pp. 73--87, 1978.
\bibitem{nemhauser19781}
G.~L. Nemhauser and L.~A. Wolsey and M.~L. Fisher,  ``An analysis of approximations for maximizing submodular set functions-{I}, ''\emph{Mathematical Programming Study}, vol.~14, no.~1, pp. 265--294, 1978.

\bibitem{conforti1984submodular}
M.~Conforti and G.~Cornu{\'e}jols, ``Submodular set functions, matroids and the greedy algorithm: {T}ight worst-case bounds and some generalizations of the {R}ado-{E}dmonds theorem,'' \emph{Discrete Applied Mathematics}, vol.~7, no.~3, pp. 251--274, 1984.
\bibitem{vondrak2010submodularity}
J.~Vondr{\'a}k, ``Submodularity and curvature: The optimal algorithm,'' \emph{RIMS Kokyuroku Bessatsu B}, vol.~23, pp.~253--266, 2010.

\bibitem{calinescu2011maximizing}
G.~Calinescu, C.~Chekuri, M.~P{\'a}l, and J.~Vondr{\'a}k,``Maximizing a monotone submodular function subject to a matroid constraint,'' \emph{SIAM Journal on Computing}, vol.~40, no.~6, pp. 1740--1766, 2011.

\bibitem{wang2012}
Z.~Wang, W.~Moran, X.~Wang and Q.~Pan,``Approximation for maximizing monotone non-decreasing set functions with a greedy method,'' \emph{Journal of Combinatorial Optimization}, 2014.

\bibitem{LiC12}
E. Liu, E. K. P. Chong, and L. L. Scharf, ``Greedy adaptive linear compression in signal-plus-noise models,''
\emph{IEEE Transactions on Information Theory}, vol.~60, no.~4, pp. 2269-2280, 2014.
\end{thebibliography}

\end{document}